\newtheorem*{thm*}{Theorem}
\newtheorem{thm}[subsection]{Theorem}
\newtheorem{defn}[subsection]{Definition}
\newtheorem{claim}[subsection]{Claim}
\newtheorem{remark}{Remark}
\theoremstyle{definition}
\newcommand{\Z}{\mathbb Z}
\newcommand{\F}{\mathbb F}
\DeclareMathOperator{\Sp}{\mathbb S}
\DeclareMathOperator{\GSp}{G\mathbb S}
\DeclareMathOperator{\BGSp}{BG\mathbb S}
\DeclareMathOperator{\BBGSp}{B^3G\mathbb S}
\DeclareMathOperator{\THH}{THH}
\DeclareMathOperator{\Hm}{H}
\DeclareMathOperator{\E}{E}
\DeclareMathOperator{\EM}{K}
\newfont{\german}{eufm10}
\newcommand\qu{/\kern-.7ex/}
\begin{document}
\pagestyle{plain}

\title
{Topological Hochschild homology of $\Hm(\Z/p^k)$.}

\author{Nitu Kitchloo}
\address{Department of Mathematics, Johns Hopkins University, Baltimore, USA}
\email{nitu@math.jhu.edu}
\thanks{Nitu Kitchloo is supported in part by the Simons Fellowship and the Max Planck Institute of Mathematics.}

\date{\today}

%\keywords{ }

{\abstract

\noindent
In this short note we study the topological Hochschild homology of Eilenberg-MacLane spectra for finite cyclic groups. In particular, we show that the Eilenberg-MacLane spectrum $\Hm(\Z/p^k)$ is a Thom spectrum for any prime $p$ (except, possibly, when $p=k=2$) and we also compute its topological Hoschshild homology. This yields a short proof of the results obtained by Brun \cite{Br}, and Pirashvili \cite{P} except for the anomalous case $p=k=2$.}
\maketitle

\tableofcontents

\section{Introduction:}

\noindent
In this note we will describe the Eilenberg-MacLane spectrum $\Hm(\Z/p^k)$ as a $E_2$-Thom spectrum of a $p$-local spherical fibration over a space of the form $\Omega F(k)$, with $F(k)$ being a loop space. In order to do so, we will decompose $\Omega F(k)$ into a product $S^1 \times \Omega^2 S^3 \langle 3 \rangle$ (though not as loop spaces) so that the virtual bundle that induces the Thom spectrum can be realized as a product bundle. Over $S^1$, this Thom spectrum is equivalent to the Moore spectrum $M(p^k)$, and over $\Omega^2 S^3 \langle 3 \rangle$ the Thom spectrum is equivalent to the Eilenberg-MacLane spectrum $\Hm(\Z_{(p)})$. This realizes the decomposition $\Hm(\Z/p^k) = M(p^k) \wedge \Hm(\Z_{(p)})$.

\section{Acknowledgements}
\noindent
The author would like to acknowledge Achim Krause and Thomas Nikolaus for encouraging him to post this calculation and for also identifying some errors in an earlier draft. He would also like to thank John Lind for several helpful conversations. The author is grateful for the hospitality and support of the Simons Foundation and the Max Planck Institute of Mathematics, Bonn, where this work was completed. 

\section{Eilenberg-MacLane spectra as Thom spectra and THH.}

\noindent
In this section we will identify $\Hm(\Z/p^k)$ as a Thom spectrum (for any odd prime $p$ and arbitrary natural number $k$, or $p=2$ and $k \neq 2$) and compute is topological Hochschild homology. Towards this we begin with a general definition:

\medskip
\begin{defn}
For an arbitrary prime $p$ and $k \geq 1$, define $F(k)$ to be the homotopy pullback diagram of fibrations:
\[
\xymatrix{
\Omega S^3 \langle 3 \rangle \ar[r]^{=} \ar[d] & \Omega S^3 \langle 3 \rangle \ar[d] \\
F(k) \ar[d]^{\xi(k)} \ar[r]^{\iota} & \Omega S^3 \ar[d]^{\xi} \\
\EM(\Z,2) \ar[r]^{p^{k-1}} & \EM(\Z,2) .}
\]
Notice that the above pullback can be delooped by replacing $\xi$ by its delooping: $S^3 \longrightarrow \EM(\Z, 3)$. Hence we notice that $\iota$ is a loop map. Notice that on looping the above pullback once, both fibrations $\xi(k)$ and $\xi$ admit compatible splittings (but not as loop spaces).  
\end{defn}

\medskip
\noindent
Let $\GSp_{(p)}$ denote the $E_\infty$ monoid given by the units of the $p$-local sphere spectrum $\Sp_{(p)}$ \cite{ABGHR}. In other words $\GSp_{(p)}$ is defined as the subspace of components in $\Omega^{\infty}(\Sp_{(p)})$ which are invertible up to homotopy: 
\[ \pi_0 (\GSp_{(p)}) = \pi_0 (\Sp_{(p)}) ^{\times} = \Z_{(p)}^{\times}. \]
Let $M(p^k)$ denote the Moore spectrum given by the cofiber of the degree $p^k$-map on the sphere spectrum. $M(p^k)$ may be described as a Thom spectrum represented by any pointed map: 
\[ \tau : S^1 \longrightarrow \Z \times \BGSp_{(p)}, \]
with the property that the $\tau$ sends the generator of $\pi_1(S^1)$ to an element of the form: $1+ p^k \lambda \in \pi_1(\BGSp_{(p)}) = \Z_{(p)}^{\times}$, for any integer $\lambda$ prime to $p$.

\medskip
\begin{thm} \label{Thh}
Let $p$ be an odd prime. Then as an $E_2$-spectrum, $\Hm(\Z/p^k)$ has the structure of Thom spectrum over $\Omega F(k)$. In particular, using \cite{Bl} one obtains an equivalence of spectra: $\THH(\Hm(\Z/p^k)) = \Hm(\Z/p^k) \wedge F(k)_+$. Furthermore, the canonical map induced by $\Z/p^k \rightarrow \Z/p$: $\THH(\Hm(\Z/p^k)) \longrightarrow \THH(\Hm(\Z/p))$ is equivalent to the map induced by: 
\[ \iota : \Hm(\Z/p^k) \wedge F(k)_+ \longrightarrow \Hm(\Z/p) \wedge \Omega S^3_+. \]
\end{thm}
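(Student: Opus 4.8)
The plan is to realise the classifying map as a double loop map directly, and then to read off its Thom spectrum from the product decomposition of $\Omega F(k)$. Write $BF(k)$ for the delooping of $F(k)$ furnished by the Definition (the pullback of $S^3\to\EM(\Z,3)$ along $p^{k-1}\colon\EM(\Z,3)\to\EM(\Z,3)$), and let $\iota'\colon BF(k)\to S^3$ be the delooping of $\iota$, so that $\Omega F(k)=\Omega^2 BF(k)$ and $\iota=\Omega\iota'$. Fix a pointed map $w\colon S^3\to\BBGSp_{(p)}$ sending the generator of $\pi_3 S^3$ to a unit $1+p\lambda\in\Z_{(p)}^{\times}=\pi_3\BBGSp_{(p)}$ with $\lambda$ prime to $p$, and put $f(k):=\Omega^2(w\circ\iota')\colon\Omega F(k)\to\BGSp_{(p)}$; being a double loop map, its Thom spectrum $Mf(k)$ is an $E_2$-ring, and this will be the sought description of $\Hm(\Z/p^k)$.

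To identify $Mf(k)$, restrict $f(k)$ along the splitting $\Omega F(k)\simeq S^1\times\Omega^2 S^3\langle 3\rangle$ from the Definition, obtained from a section of the looped fibration $\Omega\xi(k)\colon\Omega F(k)\to\Omega\EM(\Z,2)\simeq S^1$. On the factor $S^1$, the map $\Omega\iota$ has degree $p^{k-1}$ on $\pi_1$ (from the bottom row of the pullback), so $f(k)$ restricts to the map $S^1\to\BGSp_{(p)}$ hitting $(1+p\lambda)^{p^{k-1}}$; since $p$ is odd, $(1+p\lambda)^{p^{k-1}}\equiv 1+p^k\mu\pmod{p^{k+1}}$ with $\mu$ prime to $p$, so by the description of Moore spectra recalled above the Thom spectrum over this factor is $M(p^k)$. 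On the factor $\Omega^2 S^3\langle 3\rangle$ — the homotopy fibre of $\Omega\xi(k)$ — the composite with $\Omega\iota$ is $\Omega^2$ of the canonical covering map $S^3\langle 3\rangle\to S^3$, so $f(k)$ restricts there to $\Omega^2\bigl(w\circ(S^3\langle 3\rangle\to S^3)\bigr)$, whose Thom spectrum is $\Hm(\Z_{(p)})$ by the odd-primary Hopkins–Mahowald theorem (in the form: this is the $E_2$-Thom datum of $\Hm(\Z_{(p)})$ over $\Omega^2 S^3\langle 3\rangle$). As the Thom spectrum of an external product of stable spherical fibrations is the smash product of the Thom spectra, $Mf(k)\simeq M(p^k)\wedge\Hm(\Z_{(p)})\simeq\Hm(\Z/p^k)$.

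The $\THH$ formula now follows from \cite{Bl}: for the $E_2$-map $f(k)=\Omega^2(w\circ\iota')$ out of $\Omega^2 BF(k)$ one obtains a natural equivalence $\THH(Mf(k))\simeq Mf(k)\wedge B(\Omega^2 BF(k))_+\simeq\Hm(\Z/p^k)\wedge F(k)_+$, using $B(\Omega^2 BF(k))\simeq\Omega BF(k)=F(k)$. For the naturality statement, observe that when $k=1$ the map $p^{k-1}$ is the identity, so $BF(1)=S^3$, $\iota'=\mathrm{id}_{S^3}$, and $f(1)=\Omega^2 w$ is exactly the Hopkins–Mahowald datum for $\Hm(\Z/p)=\Hm(\F_p)$. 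Since $w\circ\iota'=f(k)$ by construction, the map $\iota=\Omega\iota'$ fits into a strictly commuting square of double loop maps over $\BGSp_{(p)}$ and so defines a morphism of Thom data; it induces on Thom spectra the map $\Hm(\Z/p^k)\to\Hm(\F_p)$, which by the product splitting is $(M(p^k)\to M(p))\wedge\mathrm{id}_{\Hm(\Z_{(p)})}$, hence the reduction map $\Z/p^k\to\Z/p$ on $\pi_0$ — i.e.\ the canonical map. Feeding this morphism of Thom data into the naturality of \cite{Bl} identifies the induced map on $\THH$ with $\bigl(\Hm(\Z/p^k)\to\Hm(\F_p)\bigr)\wedge B(\Omega\iota)_+$, and $B(\Omega\iota)=B(\Omega^2\iota')=\Omega\iota'=\iota\colon F(k)\to\Omega S^3$, which is the asserted map.

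The step I expect to require the most care — and the one forcing $p$ to be odd — is the identification in the second paragraph of the restriction of $f(k)$ to the $\Omega^2 S^3\langle 3\rangle$-factor with the $E_2$-Thom datum of $\Hm(\Z_{(p)})$; equivalently, that over $\Omega^2 S^3$ the $E_2$-extension of the unit $1+p\lambda$ has Thom spectrum $\Hm(\F_p)$. This is the odd-primary Hopkins–Mahowald theorem, and it genuinely fails in this form at $p=2$ (there the relevant unit is $-1$, and already the arithmetic fact $(1+2\lambda)^2\equiv 1\pmod 8$ shows that the $S^1$-factor cannot yield $M(4)$ — the anomaly at $p=k=2$). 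The remaining ingredients — the existence and shape of the splitting of $\Omega F(k)$, the multiplicativity of the Thom construction under external products, and the naturality of the equivalence of \cite{Bl} — are formal; the only additional point requiring attention is keeping the restriction-to-factor arguments and the naturality square compatible with the $E_2$-structures.
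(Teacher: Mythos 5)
Your proposal is correct and follows essentially the same route as the paper: restrict the $E_2$-classifying map (the paper's $\zeta\circ\Omega\iota$, your $\Omega^2(w\circ\iota')$) along the splitting $\Omega F(k)\simeq S^1\times\Omega^2 S^3\langle 3\rangle$, identify the two factors' Thom spectra as $M(p^k)$ and $\Hm(\Z_{(p)})$ via the odd-primary Hopkins--Mahowald theorem, and smash. You merely make explicit two points the paper leaves implicit --- the $p$-adic valuation computation $(1+p\lambda)^{p^{k-1}}=1+p^k\mu$ and the naturality argument for the map to $\THH(\Hm(\Z/p))$ --- both of which are right.
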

\begin{proof}
Consider the pullback diagram of fibrations obtained by looping the above diagram. Since $\Omega \xi$ admits a canonical splitting given by the suspension map: $S^1 \longrightarrow \Omega^2 S^3$, we have compatible splittings:
\[ \Omega \iota : \Omega F(k) = S^1 \times \Omega^2 S^3\langle 3 \rangle \longrightarrow S^1 \times \Omega^2 S^3\langle 3 \rangle = \Omega^2 S^3, \]
which is degree $p^{k-1}$ on the factor $S^1$, and the identity map on $\Omega^2 S^3 \langle 3 \rangle$. Let $\zeta$ denote the stable $p$-local spherical fibration over $\Omega^2 S^3$ represented by a double-loop map:
\[ \zeta : \Omega^2 S^3 \longrightarrow  \BGSp_{(p)}, \]
with the property that $\zeta$ restricts to the element $1+p \in \pi_1(\BGSp_{(p)}) = \Z_{(p)}^{\times}$. As mentioned above, the Thom spectrum of $\zeta$ restricted to $S^1$ under the map of degree $p^{k-1}$ is $Th(\zeta) = M(p^k)$. The restriction of $\zeta$ to $\Omega^2 S^3\langle 3 \rangle$ has a Thom spectrum equivalent to $\Hm(\Z_{(p)})$ \cite{Bl}. In particular, the Thom spectrum of the restriction of $\zeta$ to $\Omega F(k) = S^1 \times \Omega S^3 \langle 3 \rangle$ is equivalent to $M(p^k) \wedge \Hm(\Z_{(p)}) = \Hm(\Z/p^k)$. 
\end{proof}

\medskip
\noindent
As indicated earlier, the above theorem also has a variant for $p=2$. We will need to work slightly harder by first considering the $E_\infty$ monoid of units $\GSp_2$ for the 2-complete sphere $\Sp_2$, using that to deduce consequences for the $2$-local setting we are interested in. It is well known that $\pi_0(\GSp_2) = \Z_2^{\times} = \{ \pm 1\} \times \Z_2\langle 5 \rangle$, where $\Z_2 \langle 5 \rangle$ denotes the subgroup of units that are isomorphic to a copy of the 2-adic integers generated by the unit $5$. Let $\GSp^+_2$ denote the identity component of $\GSp_2$. 

\medskip
\noindent
Now consider the first two stages $P_2$ of the Postnikov decompositon for the third delooping of the units $\GSp_2$: 
\[
\xymatrix{
 \BBGSp_2^+ \ar[r]^{B^2(w_2)} \ar[d] & \EM(\Z/2, 4) \ar[d] \\
\BBGSp_2 \ar[d] \ar[r] & P_2 \ar[d] \\
\EM(\{\pm 1\}, 3) \times \EM(\Z_2\langle 5 \rangle, 3) \ar[r]^{=} & \EM(\{ \pm 1 \},3) \times \EM(\Z_2\langle 5 \rangle, 3).}
\]

\noindent
where $B^2(w_2)$ denotes a second delooping of the second Stiefel-Whitney class. It is not hard to calculate the k-invariant that defines $P_2$. This k-invariant $\theta$ is given by the projection $\pi$ onto the factor $\EM(\{ \pm 1\}, 3)$ followed by the Steenrod operation $Sq^2$:
\[ \theta = Sq^2 \pi : \EM(\{ \pm 1\},3) \times \EM(\Z_2\langle 5 \rangle, 3) \longrightarrow \EM(\{\pm 1\},3) \longrightarrow \EM(\Z/2, 5). \]

\noindent
As before, let us now consider the $\Sp_{(2)}$-bundle $\zeta$ on $\Omega^2 S^3$ obtained by taking double-loops on the element $3  \in \pi_3(\BBGSp_{(2)}) = \pi_0(\GSp_{(2)})$. The Thom-spectrum of the restriction of $\zeta$ to $S^1$ under the map of degree $2^{k-2}$ is $M(2^k)$ for $k>2$, or $M(2)$ if $k=2$. 

\medskip
\noindent
Pushing forward to the $2$-adic units, notice that the number $3$ can be expressed as a pair $(-1, \tau) \in \{\pm 1\} \times \Z_2 \langle 5 \rangle$ for some $\tau \in \Z_2 \langle 5 \rangle$. 
Using the Postnikov decomposition above, we see that the restriction of $\zeta$ to $\Omega^2 S^3 \langle 3 \rangle$ has a nonzero second Stiefel-Whitney class. By \cite{CMT}, we may conclude that the Thom spectrum for the restriction of $\zeta$ to $\Omega^2 S^3 \langle 3 \rangle$ is equivalent to $\Hm(\Z_{(2)})$. Proceeding as before, it follows that the restriction of $\zeta$ to $\Omega F(k)$ is $\Hm(\Z/2^k)$ for $k>2$ and $\Hm(\Z/2)$ for $k=2$, with the map $\THH(\Hm(\Z/2^k)) \longrightarrow \THH(\Hm(\Z/2))$ being equivalent to the map induced by $\iota$.  We therefore obtain:

\bigskip
\begin{thm} \label{Thh2}
Assume $k \neq 2$. Then as an $E_2$-spectrum, $\Hm(\Z/2^k)$ has the structure of Thom spectrum over $\Omega F(k)$. In particular, invoking results from \cite{Bl}, one obtains an equivalence of spectra: $\THH(\Hm(\Z/2^k)) = \Hm(\Z/2^k) \wedge F(k)_+$. Furthermore, the canonical map induced by $\Z/2^k \rightarrow \Z/2$: $\THH(\Hm(\Z/2^k)) \longrightarrow \THH(\Hm(\Z/2))$ is equivalent to the map induced by: 
\[ \iota : \Hm(\Z/2^k) \wedge F(k)_+ \longrightarrow \Hm(\Z/2) \wedge \Omega S^3_+. \]
\end{thm}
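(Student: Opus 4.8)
The plan is to run the proof of Theorem~\ref{Thh} verbatim at the prime $2$, feeding in the discussion that precedes this statement (the first two Postnikov stages of $\BBGSp_2$, the computation of the $k$-invariant $\theta = \Sq^2\pi$, the expression $3 = (-1,\tau)$, and the appeal to \cite{CMT}) in place of the odd-primary identification of the Thom spectrum over $\Omega^2 S^3\langle 3\rangle$. First I would loop the pullback defining $F(k)$ once. Because $\Omega\xi\colon\Omega^2 S^3 \to \Omega\EM(\Z,2) = \EM(\Z,1) = S^1$ is canonically split by the suspension $S^1 \to \Omega^2 S^3$, this produces compatible product splittings
\[
\Omega\iota\colon \Omega F(k) = S^1\times\Omega^2 S^3\langle 3\rangle \longrightarrow S^1\times\Omega^2 S^3\langle 3\rangle = \Omega^2 S^3,
\]
which is the relevant degree map on the $S^1$-factor (the one producing $M(2^k)$ in the discussion above: degree $1$ when $k=1$, degree $2^{k-2}$ when $k>2$) and the identity on $\Omega^2 S^3\langle 3\rangle$. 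Since the pullback deloops once (replace $\xi$ by $S^3\to\EM(\Z,3)$), $F(k)$ is a loop space and $\iota$ a loop map, so $\Omega F(k)$ is naturally an $E_2$-space and $\Omega\iota$ an $E_2$-map; composing with the double-loop map $\zeta$ presents $\zeta\circ\Omega\iota$ as an $E_2$-map $\Omega F(k)\to\BGSp_{(2)}$ whose Thom spectrum is an $E_2$-ring.

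Next I would compute that Thom spectrum factorwise through the splitting, exactly as in Theorem~\ref{Thh} and as outlined in the introduction. On the $S^1$-summand $\zeta$ classifies a power of $3 = 1+2 \in \pi_1(\BGSp_{(2)}) = \Z_{(2)}^\times$, and since the $2$-adic valuation of $3^{2^j}-1$ equals $j+2$ for $j\ge1$ (and $1$ for $j=0$), the chosen degree yields $M(2^k)$ precisely when $k\neq 2$; for $k=2$ the only power of $3$ that is $\equiv 1\pmod 4$ is $3$ itself, so one is forced onto $M(2)$ rather than $M(4)$ — this is the anomalous case the hypothesis removes. On $\Omega^2 S^3\langle 3\rangle$ I would check, via the Postnikov tower above and the decomposition $3 = (-1,\tau)\in\{\pm1\}\times\Z_2\langle 5\rangle$, that the class $B^2(w_2)$ restricts nontrivially (the $k$-invariant $\theta=\Sq^2\pi$ detects the nonzero $\{\pm1\}$-component of $3$), so that \cite{CMT} identifies the restricted Thom spectrum with $\Hm(\Z_{(2)})$. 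Since the restricted bundle is a product bundle, its Thom spectrum is the smash product $M(2^k)\wedge\Hm(\Z_{(2)}) = \Hm(\Z/2^k)$, which establishes the first assertion.

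For the $\THH$ computation I would invoke \cite{Bl}: the $E_2$-Thom spectrum of $\zeta\circ\Omega\iota$ has $\THH$ naturally equivalent to $\Hm(\Z/2^k)\wedge B(\Omega F(k))_+ = \Hm(\Z/2^k)\wedge F(k)_+$, using that the delooping of the connected loop space $\Omega F(k)$ is $F(k)$. For the last assertion I would note that $\Omega\iota$ is a map of $E_2$-spaces over $\BGSp_{(2)}$ from $(\Omega F(k),\zeta\circ\Omega\iota)$ to $(\Omega F(1) = \Omega^2 S^3,\zeta)$ — indeed $\iota\colon F(k)\to\Omega S^3 = F(1)$ is the natural map of pullback diagrams — hence induces an $E_2$-map of Thom spectra $\Hm(\Z/2^k)\to\Hm(\Z/2)$, which being a unital map of $E_2$-rings must be the reduction induced by $\Z/2^k\to\Z/2$. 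Applying the naturality of the \cite{Bl} equivalence to this map of Thom data then produces the commuting square identifying $\THH(\Hm(\Z/2^k))\to\THH(\Hm(\Z/2))$ with the reduction smashed with $B(\Omega\iota) = \iota$, i.e. with the map induced by $\iota\colon \Hm(\Z/2^k)\wedge F(k)_+ \to \Hm(\Z/2)\wedge\Omega S^3_+$.

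The main obstacle is the middle step: verifying that the restriction of $\zeta$ to $\Omega^2 S^3\langle 3\rangle$ carries exactly the nontrivial twist required, and that \cite{CMT} genuinely pins down its Thom spectrum as $\Hm(\Z_{(2)})$ rather than merely a spectrum with the correct mod-$2$ homology. This is where the Postnikov-tower computation of $\theta = \Sq^2\pi$ and the decomposition $3 = (-1,\tau)$ do the essential work; the $S^1$-factor is then routine $2$-adic bookkeeping whose only subtlety is the loss of a power of $2$ at $k=2$, and the $\THH$ and naturality statements follow formally from \cite{Bl} once the Thom-spectrum identification and its functoriality are in place.
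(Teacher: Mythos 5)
Your proposal is correct and follows essentially the same route as the paper: the paper's proof of this theorem is precisely the preceding discussion (the Postnikov analysis of $\BBGSp_2$, the $k$-invariant $\theta=\Sq^2\pi$ detecting the $\{\pm1\}$-component of $3=(-1,\tau)$, and the appeal to \cite{CMT}) combined with "proceeding as before," i.e.\ rerunning the splitting argument of Theorem \ref{Thh} factorwise over $S^1\times\Omega^2S^3\langle3\rangle$. You even supply the $2$-adic valuation bookkeeping $v_2(3^{2^j}-1)=j+2$ that explains why $k=2$ is excluded, which the paper leaves implicit.
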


\bigskip
\noindent
The next step is to compute the homotopy of $\THH(\Hm(\Z/p^k))$. This reduces to the calculation of the mod $p$-cohomology of $F(k)$, while keeping track of  higher Bocksteins.

\medskip
\begin{claim} Let $p$ be an arbitrary prime, and $k>1$. Then as a Hopf algebra, $\Hm^\ast(F(k), \Z/p)$ is: 
\[ \Hm^\ast(F(k), \Z/p) = \E(x_{2p-1}) \otimes \Gamma (x_{2p}) \otimes \F_p[x_2], \]
where subscripts denote the degrees of the respective classes. For $k=1$, we have:
\[ \Hm^\ast(\Omega S^3, \Z/p) =  \Gamma (y_2). \]
Furthermore, the map $\iota : F(k) \longrightarrow \Omega S^3$ has the property:
\[ \iota^\ast \gamma_{pn}(y_2) = \gamma_n(x_{2p}), \quad \iota^\ast \gamma_{m}(y_2) = 0, \quad \mbox{if $p$ does not divide $m$}. \]
\end{claim}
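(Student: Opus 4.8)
The plan is to analyze the Serre spectral sequence of the ``horizontal'' fibration hidden in the defining pullback square. For $k=1$ the statement is classical, since $p^{k-1}=\mathrm{id}$ forces $F(1)=\Omega S^3$ and $\Hm^\ast(\Omega S^3;\F_p)=\Gamma(y_2)$; so assume $k>1$. Because $F(k)$ is the homotopy pullback of $\Omega S^3\xrightarrow{\ \xi\ }\EM(\Z,2)\xleftarrow{\ p^{k-1}\ }\EM(\Z,2)$, the map $\iota$ is the base change of $p^{k-1}$ along $\xi$, so its homotopy fibre is the homotopy fibre of $p^{k-1}\colon\EM(\Z,2)\to\EM(\Z,2)$, namely $\EM(\Z/p^{k-1},1)$; this yields a fibration
\[
\EM(\Z/p^{k-1},1)\longrightarrow F(k)\xrightarrow{\ \iota\ }\Omega S^3
\]
with simply connected base. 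First I would record, from its long exact homotopy sequence (using that $\xi$ is an isomorphism on $\pi_2$, so the boundary $\pi_2\Omega S^3\to\pi_1\EM(\Z/p^{k-1},1)$ is the reduction $\Z\twoheadrightarrow\Z/p^{k-1}$), that $F(k)$ is simply connected, $\pi_2F(k)\cong\Z$, and $\xi(k)$ is an isomorphism on $\pi_2$; hence $\Hm^2(F(k);\F_p)=\F_p$, generated by $x_2:=\xi(k)^\ast\iota_2$, the pullback of the fundamental class of $\EM(\Z,2)$.

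Next I would compute the low pages. The $E_2$-page is $\Gamma(y_2)\otimes\Hm^\ast(\EM(\Z/p^{k-1},1);\F_p)=\Gamma(y_2)\otimes\E(a_1)\otimes\F_p[b_2]$, where in the single anomalous case $p=k=2$ the fibre cohomology $\Hm^\ast(\EM(\Z/2,1);\F_2)=\F_2[a_1]$ is polynomial, and one simply writes $\F_2[a_1]=\E(a_1)\otimes\F_2[b_2]$ with $b_2:=a_1^2$ to keep things uniform. Since $\Hm^1(F(k))=0$, the class $a_1$ must transgress, so $d_2(a_1)=\lambda y_2$ with $\lambda\in\F_p^\times$; since $\Hm^2(F(k))\neq 0$, the class $b_2$ survives, so $d_2(b_2)=0$, and $d_2$ is the $\F_p[b_2]$-linear derivation with $d_2(a_1\gamma_n(y_2))=\lambda(n+1)\gamma_{n+1}(y_2)$. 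Identifying $\ker d_2$ and $\operatorname{im}d_2$ (a short calculation using the mod-$p$ Lucas congruences $\binom{mp+np}{mp}\equiv\binom{m+n}{m}$ and $\binom{mp-1}{p-1}\equiv 1$) yields
\[
E_3=\Gamma\!\big(\gamma_p(y_2)\big)\otimes\E\!\big(a_1\gamma_{p-1}(y_2)\big)\otimes\F_p[b_2],
\]
whose algebra generators are $\gamma_{p^i}(y_2)$ ($i\ge 1$, bidegree $(2p^i,0)$), $a_1\gamma_{p-1}(y_2)$ (bidegree $(2p-2,1)$), and $b_2$ (bidegree $(0,2)$).

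The main obstacle will be the collapse $E_3=E_\infty$, after which everything is forced. It holds because every algebra generator of $E_3$ sits in fibre filtration $\le 2$, while a differential $d_r$ with $r\ge 3$ lowers the fibre degree by $r-1\ge 2$: it therefore lands in negative fibre degree on the generators of fibre degree $\le 1$, and on $b_2$ it lands (for $r=3$) in $E_3^{3,0}$, a subquotient of $\Hm^3(\Omega S^3;\F_p)=0$, and (for $r\ge 4$) again in negative fibre degree. Hence all algebra generators are permanent cycles, so each $d_r$ ($r\ge 3$), being a derivation, vanishes. The multiplicative extension problems are then settled by internal degree counts: $x_2$ has all powers nonzero on $E_\infty$, hence generates a polynomial subalgebra; the lift $x_{2p-1}$ of $a_1\gamma_{p-1}(y_2)$ has $x_{2p-1}^2$ in filtration $\ge 4p-3$, a group which vanishes in total degree $4p-2$, so $x_{2p-1}^2=0$; and the lifts $\gamma_n(x_{2p})$ of $\gamma_n(\gamma_p(y_2))\in E_\infty^{2pn,0}$ lie in the top filtration of their total degree, so their products are read off from $E_\infty$ with no correction, giving a divided-power subalgebra. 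A Poincaré-series comparison then identifies $\Hm^\ast(F(k);\F_p)$ with $\E(x_{2p-1})\otimes\Gamma(x_{2p})\otimes\F_p[x_2]$ as a graded algebra.

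For the Hopf-algebra structure and the behaviour of $\iota$, I would use that $\iota$ and $\xi(k)$ are loop maps (as noted just after the Definition), so $\iota^\ast$ and $\xi(k)^\ast$ are morphisms of Hopf algebras, together with the fact that $\iota^\ast$ is the base edge homomorphism $\Hm^\ast(\Omega S^3)=E_2^{\ast,0}\twoheadrightarrow E_\infty^{\ast,0}\hookrightarrow\Hm^\ast(F(k))$ of the above spectral sequence. Since $\gamma_m(y_2)$ is a $d_2$-boundary exactly when $p\nmid m$ (because $d_2(a_1\gamma_{m-1}(y_2))=\lambda m\,\gamma_m(y_2)$), one gets $\iota^\ast\gamma_m(y_2)=0$ for $p\nmid m$, while for $p\mid m$ the class $\gamma_m(y_2)$ survives and $\iota^\ast\gamma_{pn}(y_2)$ is its nonzero image; putting $x_{2p}:=\iota^\ast\gamma_p(y_2)$ and transporting along the resulting injection $\iota^\ast\colon\Gamma(\gamma_p(y_2))\hookrightarrow\Hm^\ast(F(k))$ gives $\iota^\ast\gamma_{pn}(y_2)=\gamma_n(x_{2p})$ and exhibits $\operatorname{im}\iota^\ast$ as the divided-power factor. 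The three generators are primitive: $x_2=\xi(k)^\ast\iota_2$ is primitive because $\iota_2$ is; $x_{2p-1}$ is primitive because it is the lowest-degree class in odd degrees, so every term of its reduced coproduct has a tensor factor in an odd degree $<2p-1$, where the cohomology vanishes; and $x_{2p}=\iota^\ast\gamma_p(y_2)$ is primitive because the non-primitive part $\sum_{0<i<p}\gamma_i(y_2)\otimes\gamma_{p-i}(y_2)$ of $\psi(\gamma_p(y_2))$ is annihilated by $\iota^\ast\otimes\iota^\ast$. Since a Hopf structure is determined by its values on algebra generators, this identifies $\Hm^\ast(F(k);\F_p)$ with $\E(x_{2p-1})\otimes\Gamma(x_{2p})\otimes\F_p[x_2]$ as a Hopf algebra.
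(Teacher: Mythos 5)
Your proposal is correct, and it reaches the claim by a genuinely more self-contained route than the paper. The paper's proof plays the two fibrations of the three-by-three diagram off each other: it takes the additive and algebra structure from the collapse (for $k>1$) of the Serre spectral sequence of $\xi(k)\colon F(k)\to \EM(\Z,2)$ with fibre $\Omega S^3\langle 3\rangle$ --- which presupposes the known computation $\Hm^*(\Omega S^3\langle 3\rangle;\F_p)=\E(x_{2p-1})\otimes\Gamma(x_{2p})$ --- and then uses the single $d_2$ in the spectral sequence of $\iota$ only to pin down $\iota^*$ and to produce a split extension of Hopf algebras $1\to\Gamma(x_{2p})\otimes\F_p[x_2]\to\Hm^*(F(k),\Z/p)\to\E(x_{2p-1})\to 1$. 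You instead work entirely inside the spectral sequence of the single fibration $\EM(\Z/p^{k-1},1)\to F(k)\to\Omega S^3$, push it to $E_3=E_\infty$ via the fibre-filtration argument, and resolve the multiplicative and coalgebra extensions by hand (filtration counts for $x_{2p-1}^2=0$ and for the polynomial and divided-power factors, primitivity of generators, Lucas congruences for the divided-power relations). What your approach buys is independence from the input $\Hm^*(\Omega S^3\langle 3\rangle;\F_p)$ and from the Hopf-algebra extension formalism, at the cost of a longer $E_\infty$-page and extension analysis; what the paper's approach buys is brevity, since once the $\xi(k)$-sequence is known to collapse the answer is forced and only the splitting of the extension (primitivity and vanishing square of the lift of $x_{2p-1}$, which you also verify) remains. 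The key computational overlap is the differential $d_2(a_1)=\lambda y_2$ and its consequence that $\gamma_m(y_2)$ dies exactly when $p\nmid m$, which both arguments use to derive the stated formula for $\iota^*$. Your handling of the anomalous fibre $\Hm^*(\EM(\Z/2,1);\F_2)=\F_2[a_1]$ when $p=k=2$ by setting $b_2=a_1^2$ is a correct and welcome explicit treatment of a case the paper's proof passes over silently.
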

\begin{proof}
Consider the three-by-three diagram of fibrations:
\[
\xymatrix{ \ast \ar[r] \ar[d] & \Omega S^3\langle 3 \rangle \ar[r]^{=} \ar[d] & \Omega S^3 \langle 3 \rangle \ar[d] \\
\EM(\Z/p^{k-1}, 1) \ar[r] \ar[d]^{=} & F(k) \ar[d]^{\xi(k)} \ar[r]^{\iota} & \Omega S^3 \ar[d]^{\xi} \\
\EM(\Z/p^{k-1}, 1) \ar[r]^{\quad \beta} & \EM(\Z,2) \ar[r]^{p^{k-1}} & \EM(\Z,2) .}
\]
One may analyze the Serre spectral sequence in cohomology with for the two fibrations $\iota$ and $\xi(k)$. It is easy to see that given $k>1$, the spectral sequence for $\xi(k)$ collapses (both integrally and over $\Z/p$). The spectral sequence with coefficients in $\Z/p$ for $\iota$ has one differential $d_2$ with target $\gamma_1(y_2)$ which wipes out all classes in $\Hm^*(\Omega S^3, \Z/p)$ generated by $\gamma_m(y_2)$ for which $p$ does not divide $m$. Hence $\iota^*$ and $\xi(k)^*$ yield an extension of Hopf algebras:
\[ 1 \longrightarrow \Gamma(x_{2p}) \otimes \F_p[x_2] \longrightarrow \Hm^*(F(k), \Z/p) \longrightarrow \E(x_{2p-1}) \longrightarrow 1. \]
First note that the (unique) lift of $x_{2p-1}$ is primitive by degree reasons. If $p$ is odd, then we know the square of $x_{2p-1}$ is trivial. If $p=2$, then the square of $x_{2p-1}$ is also primitive but since there are no primitives in degree $4p-2$, it follows that this class squares to zero even if $p=2$. Hence the above extension splits as Hopf algebras. 
\end{proof}

\medskip
\begin{remark}
Recall that for $k>1$, the Serre spectral sequence for the fibration $\xi(k)$ collapses with coefficients in $\Z$ and $\Z/p$. It easily follows that all additive extensions are trivial in the integral spectral sequence for the bundle $\xi(k)$. In other words, we have an isomorphism of groups: $\Hm^*(F(k),\Z) = \Hm^*(\Omega S^3\langle 3 \rangle, \Z) \otimes \Z[x_2]$ for $k>1$. Alternatively stated, the class $x_{2p-1} \gamma_{p^{n-1}-1}(x_{2p})$ supports a non-trivial Bockstein homomorphism of height $n$ with target $\gamma_{p^{n-1}}(x_{2p})$. From this observation, it is straightforward to recover the results by Brun \cite{Br} and Pirashvili \cite{P}. 
\end{remark}

\medskip
\begin{remark}
Even though $F(k)$ is an H-space, it is not known if the equivalence in Theorem \ref{Thh} respects the algebra structure. In particular, we can only conclude that $\Hm_*(F(k), \Z/p^k)$ is equivalent to $\THH_*(\Hm(\Z/p^k))$ as groups. 
\end{remark}

\medskip
\begin{remark}
There appears to be some confusion in the literature regarding the possibility of realizing $\Hm(\Z/p^k)$ as a Thom spectrum for $k>1$. In \cite{BCS} it is claimed that such a realization does not exist (see remark following Theorem 1.4) citing \cite{Bl} as a reference for more details. This appears to contradict our result. However, \cite{Bl} (Remark 9.4) only claims that such a spectrum cannot be constructed from a virtual bundle over $\Omega^2 S^3$. This claim is supported by studying the action of the Dyer-Lashof operaton $Q_2$. Notice that even though $\Omega F(k)$ is abstractly equivalent to $\Omega^2 S^3$ as spaces, the loop structure on $\Omega F(k)$ is different from $\Omega^2 S^3$ if $k>1$. We suspect that this difference is witnessed by $Q_2$, resolving the apparent contradiction. 
\end{remark}

\pagestyle{empty}
\bibliographystyle{amsplain}
%\bibliography{}

\begin{thebibliography}{10}
\bibitem[ABGHR]{ABGHR}, M. Ando, A. J. Blumberg, D. Gepner, M. J. Hopkins, C. Rezk, \textit{Units of ring spectra, orientations, and Thom spectra via rigid infinite loop space theory}, available at: arXiv:1403.4320v1, 2014. 
\bibitem[Bl]{Bl} A. Blumberg, \textit{$\THH$ of Thom spectra which are $E_\infty$ ring spectra}, Journal of Topology, 3, 2010, 535--560. 
\bibitem[BCS]{BCS} A. Blumberg, R. Cohen, C. Schlichtkrull, \textit{Topological Hochschild homology of Thom spectra and free loop space}, Geometry and Topology, 14, 2010, 1165--1242. 
\bibitem[Br]{Br} M. Brun, \textit{Topological Hochschild homology of $\Z/p^n$}, Journal of Pure and Applied Alg., 148, 2000, 29--76. 
\bibitem[CMT]{CMT} F. R. Cohen, J. P. May, L. R. Taylor, \textit{$\EM(\Z,0)$ and $\EM(\Z/2, 0)$ as Thom spectra}, Illinois J. of Math., 25(1), 1981, 99--106.  
\bibitem[P]{P} T. Pirashvili, \textit{On the Topological Hochschild homology of $Z/p^kZ$}, Communications in Algebra, 23(4), 1995, 1545--1549. 

\end{thebibliography}
\providecommand{\bysame}{\leavevmode\hbox
to3em{\hrulefill}\thinspace}

\end{document}